\newtheorem{Theorem}{Theorem}
\newtheorem{Lemma}[Theorem]{Lemma}
\newtheorem{Corollary}[Theorem]{Corollary}
\theoremstyle{remark}
\newtheorem{Remark}[Theorem]{Remark}
\theoremstyle{definition}
\begin{document}
\title{On the Fourier transforms of self-similar measures}
\author{Masato TSUJII}
\address{Department of Mathematics, Kyushu University, Fukuoka, 819-0395, JAPAN}
\email{tsujii@math.kyushu-u.ac.jp}
\date{\today}
\subjclass[2010]{Primary 28A80, Secondary 42A38}

\keywords{Self-similar measure, Fourier transform, Large deviation}

\begin{abstract} For the Fourier transform $\mathcal{F}\mu$ of a general (non-trivial) self-similar measure~$\mu$ on the real line $\mathbb{R}$, we prove a large deviation estimate 
\[
\lim_{c\to +0} \varlimsup_{t\to \infty}\frac{1}{t}\log \left(\mathrm{Leb}\{x\in [-e^t , e^t]\mid |\mathcal{F}\mu(\xi)| \ge e^{-ct}  \}\right)=0.
\]
\end{abstract}
\maketitle

\def\real{\mathbb{R}}
\section{Introduction}
For  contractive affine maps $S_i:\real \to \real$, $i=1,2,\cdots, m$  and probabilities $0<p_i<1$ with $\sum_{i=1}^m p_i=1$, there exists a unique probability 
measure $\mu$ such that 
\[
\mu= \sum_{i=1}^m p_i \cdot \mu\circ S_i^{-1}.
\]
The probability measure $\mu$ thus defined is called {\em self-similar measure} and  has been studied extensively in fractal geometry. We refer \cite{MattilaBook} for the general background.    In this paper, we pose a few questions about the Fourier transforms of self-similar measures and then give a partial answer to one of them. 
To explain the questions, let us consider the  simple setting: 
\[
m=2,\quad S_1(x)=x/3, \quad S_2(x)=(x+2)/3, \quad p_1=p_2=1/2,
\]
in which the self-similar measure $\mu$ is the (normalized) Hausdoff measure of dimension 
\[
\delta=\log 2/\log 3
\]
on the standard middle-third Cantor set in $[0,1]$. It is not difficult to see that the Fourier transform of the measure $\mu$ is given as the infinite product
\begin{equation}\label{eq:product_formula}
\mathcal{F}\mu(\xi)=\prod_{\ell=1}^{\infty} \left(\frac{1+\exp(-2\cdot 3^{-\ell}\sqrt{-1}\cdot \xi)}{2}\right).
\end{equation}
A well-known result of Strichartz\cite{Strichartz}, which holds in more general setting,  gives the asymptotic formula
\[
\frac{1}{2R}\int_{-R}^{R} |\mathcal{F}\mu(\xi)|^2 d\xi =\mathcal{O}( R^{-\delta})\quad \mbox{ as $R\to \infty$}.
\]
This implies roughly that the square-value $|\mathcal{F}\mu(\xi)|^2$ decreases like $|\xi|^{-\delta}$ as $|\xi|$ tends to infinity provided that  it is averaged appropriately. 
We are interested in the deviation of the values of $\mathcal{F}\mu(\xi)$ from such  averaged behavior. 
  
Let us take logarithm of the both sides of (\ref{eq:product_formula}). Then we see
\begin{equation}
\log|\mathcal{F}\mu(\xi)|=\sum_{\ell=1}^\infty\psi(3^{-\ell} \xi)
\label{eq:Fmu}
\end{equation}
where
\[
\psi(x)=\log\frac{|1+e^{-2\sqrt{-1}x}|}{2}.
\]
Note that the sum on the right side of (\ref{eq:Fmu}) converges because, for each $\xi$,  the term $\psi(3^{-\ell} \xi)$ converges to $0$ exponentially fast as $\ell \to \infty$. 
By a slightly more precise consideration, we see that there exists  a constant $C>0$, independent of $N$, such that
\[
\left|\log|\mathcal{F}\mu(\xi)|-\sum_{\ell=1}^{N}\psi(3^{-\ell} \xi)\right|=
\left|\sum_{\ell=N+1}^\infty\psi(3^{-\ell} \xi)\right| <C 
\]
holds for all $\xi$ with $|\xi|\le 3^{N}\pi$. Therefore we may regard the function
\begin{equation}\label{psi}
\sum_{\ell=1}^{N}\psi(3^{-\ell} \xi)
\end{equation}
as an approximation of the function $\log|\mathcal{F}\mu(\xi)|$ on $[-3^N\pi,3^N\pi]$ with a bounded error term. 
By change of variable $\theta=3^{-N}\xi$, we see that  the (normalized) distribution of the values of the function (\ref{psi})  on $[-3^N\pi,3^N\pi]$ is same as that of the function 
\[
X_N:[-\pi, \pi]\to \real,\quad X_N(\theta)=\sum_{\ell=0}^{N-1}\psi(3^{\ell}\theta).
\]
The average of $X_N(\cdot)/N$ is
\begin{align*}
A:&=\frac{1}{N\cdot 2\pi }\int_{-\pi}^{ \pi} 
X_N(\theta) d\theta=\frac{1}{2\pi}\int_{-\pi}^\pi \psi(\xi) d\xi. \end{align*}
To analyze the deviation of  the values of $X_N(\cdot)/N$ from the average  $A$, 
it is useful to introduce the view point of dynamical system. 
Let us consider the dynamical system generated by the map
\[
T:\real/2\pi \mathbb{Z}\to \real/2\pi \mathbb{Z}, \qquad T(x)=3x \mod 2\pi\mathbb{Z}
\]
and regard the function $X_N(\theta)/N$  as the Birkhoff average of the observable $\psi$  along the orbit of $\theta$:
\[
\frac{1}{N}X_N(\theta)=\frac{1}{N}\sum_{\ell=0}^{N-1}\psi\circ T^\ell(\theta).
\]  
Then the results of Takahashi\cite{Takahashi87} and Young\cite{Young90} on the large deviation principle in dynamical system setting tells 
 that\footnote{Actually we can not apply the results in \cite{Takahashi87} and \cite{Young90} directly because the function $\psi$ has logarithmic singularities at $x=\pm \pi/2$ and hence is not continuous. Still we can obtain the large deviation estimate stated here by approximating $\psi$ by smooth functions from above. It seems reasonable to expect that the limit is exact and the inequality is actually an equality.}, for any $c\ge 0$, it holds
\[
\varlimsup_{N\to \infty} \frac{1}{N}\log \left(\frac{m\{x\in [-3^N \pi, 3^N\pi]\mid |\mathcal{F}\mu(\xi)| \ge e^{-cN}  \}}{3^N\cdot 2\pi}  \right)\le \widehat{R}(c)-\log 3
\]
with setting
\begin{equation}\label{eq:Rc}
\widehat{R}(c)=\sup\left \{h_{\nu}(T)\;\left|\; \nu\in \mathcal{M}_T \mbox{ such that } \int \psi d\nu\ge -c\right.\right\}\ge 0
\end{equation}
where $\mathcal{M}_T$ denotes the space of $T$-invariant Borel probability measures and $h_\nu(T)$  the measure theoretical entropy of $T$ with respect to a  measure $\nu\in \mathcal{M}_T$. Since the  entropy $h_{\nu}(T)$ is an upper semi-continuous functional on the space $\mathcal{M}_T$ with respect to the weak topology, we may take the supremum in (\ref{eq:Rc}) as the maximum.

The function $\widehat{R}(c)$ is  increasing with respect to $c$  by definition. Further we can make the following observations: (Figure \ref{fig})
\begin{itemize}
\item[(A)] $\widehat{R}(c)-\log 3\le 0$ and the inequality is strict if and only if 
$-c>A$ (or $c<|A|$), because  the (normalized) uniform measure is the unique maximal entropy ($=\log 3$) measure for~$T$.
\item[(B)] $\lim_{c\to +0} \widehat{R}(c)=0$. This is because, 
if $c>0$ is close to $0$, a $T$-invariant probability measure $\nu$ satisfying $(2\pi)^{-1}\int \psi d\nu\ge -c$ must concentrate mostly on a small neighborhood of the points $0, \pi\in \real/2\pi \mathbf{Z}$ at which $\psi$ attains its maximum value $0$.
\item[(C)] The function $\widehat{R}(c)$ is a convex function on $[0,|A|]$. This follows  from the definition of $\widehat{R}(c)$ and the affine property of the entropy $h_{\nu}(T)$ with respect to $\nu$.
\end{itemize}
\begin{figure}
\begin{center}
%WinTpicVersion4.22
\unitlength 0.1in
\begin{picture}( 30.1000, 22.7500)(  4.0000,-28.1000)
% VECTOR 2 0 3 0 Black White
% 4 780 2810 780 630 580 2410 3680 2410
% 
{\color[named]{Black}{%
\special{pn 8}%
\special{pa 780 2810}%
\special{pa 780 630}%
\special{fp}%
\special{sh 1}%
\special{pa 780 630}%
\special{pa 760 698}%
\special{pa 780 684}%
\special{pa 800 698}%
\special{pa 780 630}%
\special{fp}%
\special{pa 580 2410}%
\special{pa 3680 2410}%
\special{fp}%
\special{sh 1}%
\special{pa 3680 2410}%
\special{pa 3614 2390}%
\special{pa 3628 2410}%
\special{pa 3614 2430}%
\special{pa 3680 2410}%
\special{fp}%
}}%
% LINE 2 2 3 0 Black White
% 4 790 990 2580 990 2580 990 2580 2390
% 
{\color[named]{Black}{%
\special{pn 8}%
\special{pa 790 990}%
\special{pa 2580 990}%
\special{dt 0.045}%
\special{pa 2580 990}%
\special{pa 2580 2390}%
\special{dt 0.045}%
}}%
% SPLINE 1 0 3 0 Black White
% 4 780 2400 1030 1810 2570 990 2570 990
% 
{\color[named]{Black}{%
\special{pn 13}%
\special{pa 780 2400}%
\special{pa 790 2370}%
\special{pa 802 2338}%
\special{pa 812 2306}%
\special{pa 822 2276}%
\special{pa 832 2244}%
\special{pa 844 2214}%
\special{pa 854 2182}%
\special{pa 926 2002}%
\special{pa 982 1890}%
\special{pa 1030 1812}%
\special{pa 1048 1786}%
\special{pa 1064 1762}%
\special{pa 1084 1736}%
\special{pa 1102 1714}%
\special{pa 1122 1690}%
\special{pa 1162 1646}%
\special{pa 1184 1624}%
\special{pa 1204 1602}%
\special{pa 1226 1582}%
\special{pa 1250 1562}%
\special{pa 1272 1542}%
\special{pa 1296 1524}%
\special{pa 1320 1504}%
\special{pa 1346 1486}%
\special{pa 1370 1468}%
\special{pa 1396 1452}%
\special{pa 1422 1434}%
\special{pa 1448 1418}%
\special{pa 1476 1402}%
\special{pa 1502 1386}%
\special{pa 1558 1354}%
\special{pa 1586 1340}%
\special{pa 1616 1324}%
\special{pa 1644 1310}%
\special{pa 1704 1282}%
\special{pa 1734 1270}%
\special{pa 1764 1256}%
\special{pa 1796 1244}%
\special{pa 1826 1232}%
\special{pa 1858 1218}%
\special{pa 1888 1206}%
\special{pa 1952 1182}%
\special{pa 1984 1172}%
\special{pa 2016 1160}%
\special{pa 2050 1148}%
\special{pa 2114 1128}%
\special{pa 2148 1116}%
\special{pa 2180 1106}%
\special{pa 2282 1076}%
\special{pa 2314 1064}%
\special{pa 2382 1044}%
\special{pa 2416 1036}%
\special{pa 2552 996}%
\special{pa 2570 990}%
\special{fp}%
}}%
% STR 2 0 3 0 Black White
% 4 400 875 400 925 2 0 0 0
% Koko
\put(4.0000,-10.2500){\makebox(0,0)[lb]{$\log 3$}}%
% STR 2 0 3 0 Black White
% 4 2520 2465 2520 2515 2 0 0 0
% Ko
\put(25.2000,-26.1500){\makebox(0,0)[lb]{$|A|$}}%
% LINE 1 0 3 0 Black White
% 2 2580 990 3165 990
% 
{\color[named]{Black}{%
\special{pn 13}%
\special{pa 2580 990}%
\special{pa 3166 990}%
\special{fp}%
}}%
% STR 2 0 3 0 Black White
% 4 3410 2425 3410 2475 2 0 0 0
% x
\put(34.1000,-25.7500){\makebox(0,0)[lb]{c}}%
% STR 2 0 3 0 Black White
% 4 680 630 680 680 2 0 0 0
% y
\put(6.8000,-5.8000){\makebox(0,0)[lb]{$\widehat{R}(c)$}}%
\end{picture}%
\end{center}
\caption{A schematic picture of the graph of $\widehat{R}(c)$}
\label{fig}
\end{figure}
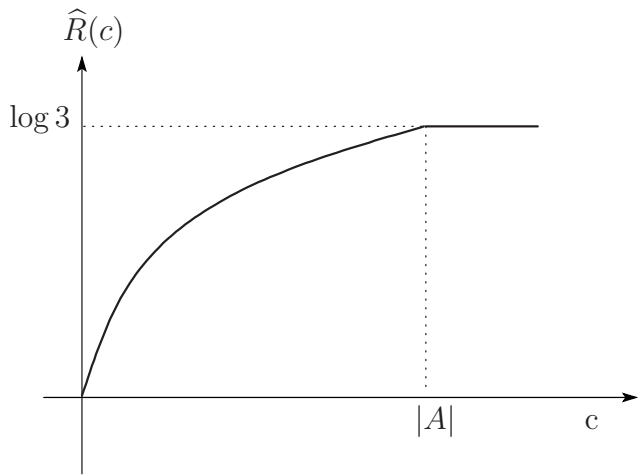
The question that we would like to pose is whether and/or how these observations remain true for more general classes of self-similar measures (or classes of dynamically defined measures). 
In this paper, we give a simple (and rather modest) result which generalizes the observation (B) to general (non-trivial) self-similar measures.

\section{Result} 
Let $m>1$ be an integer. 
For  $i=1,2,\cdots, m$, let 
\[
S_i:\real\to \real, \quad S_i(x)=a_i x+b_i
\]
be contacting affine maps, {\it i.e. } $|a_i|<1$.
Let $0<p_i<1$, $i=1,2,\cdots, m$, be real numbers such that $\sum_{i=1}^m p_i=1$. Then 
 there exists a unique probability 
measure $\mu$ such that 
\[
\mu= \sum_{i=1}^m p_i \cdot \mu\circ S_i^{-1}.
\]
The support $K$ of the measure $\mu$  is the unique compact subset that satisfies 
\[
K=\bigcup_{i=1}^m S_i(K).
\]
We henceforth assume that the subset $K$ is not a single point, to avoid the trivial case.
The Fourier transform  $\mathcal{F}\mu$ of the measure $\mu$ is  the real-analytic function defined by
\[
\mathcal{F} \mu(\xi)=\int \exp(-\sqrt{-1}\xi x)\, d\mu(x).
\]
Let $R(c\,;\mu)$ be the function defined for $\mu$ by
\begin{equation}\label{eq:Rcm}
R(c\,;\mu)=
\varlimsup_{t\to \infty} \frac{1}{t}\log \left(\mathrm{Leb}\{x\in [-e^t , e^t]\mid |\mathcal{F}\mu(\xi)| \ge e^{-ct}  \}\right).
\end{equation}
Note that this coincides with the quantity $\widehat{R}(c)$ in the special case considered in the last section. 
The following is the main result of this paper, which generalizes the observation (B) in the last section to general (non-trivial) self-similar measures. 
\begin{Theorem}\label{th:main}  $\lim_{c\to +0} R(c\,;\mu)=0$. 
\end{Theorem}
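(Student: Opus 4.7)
My strategy is to extend the dynamical-systems approach sketched for the middle-thirds Cantor measure in the introduction to the general self-similar case, replacing the single expanding map $T\colon x\mapsto 3x$ on the circle by a skew-product random dynamical system that accommodates unequal contractions $a_i$. First I would iterate the self-similarity relation $n$ times to obtain
$$
\mathcal{F}\mu(\xi)=\sum_{\mathbf{i}\in\{1,\dots,m\}^n} p_{\mathbf{i}}\,e^{-\sqrt{-1}\,c(\mathbf{i})\xi}\,\mathcal{F}\mu(a_{\mathbf{i}}\xi),
$$
where $p_{\mathbf{i}}=\prod_k p_{i_k}$, $a_{\mathbf{i}}=\prod_k a_{i_k}$, and $c(\mathbf{i})=b_{i_1}+a_{i_1}b_{i_2}+\cdots+a_{i_1}\cdots a_{i_{n-1}}b_{i_n}$. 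For $|\xi|$ of order $e^t$, the natural iteration depth is $n\sim t/\chi$ with $\chi=-\sum_i p_i\log|a_i|>0$ (the Lyapunov exponent of the IFS), so that $a_{\mathbf{i}}\xi$ is typically of order unity.

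Next I would set up the dynamical framework. In the equal-contraction case $a_i\equiv a$ this is clean: one has $\mathcal{F}\mu(\xi)=\prod_{k\ge 0}\varphi(a^k\xi)$ with $\varphi(\xi)=\sum_i p_i e^{-\sqrt{-1}b_i\xi}$, which converts $\log|\mathcal{F}\mu(\xi)|$ into a Birkhoff sum of $\psi:=\log|\varphi|$ along the expanding map $T(\xi)=a^{-1}\xi$ on a suitable circle, and a Takahashi-Young-type LDP then yields a bound of the form $R(c;\mu)\le \widehat{R}(c)/\chi$ with the rate function $\widehat{R}(c) = \sup\{h_\nu(T)\colon\nu\in\mathcal{M}_T,\ \int \psi\,d\nu\ge-c\}$. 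In general I would work with the skew product $T(\omega,\xi)=(\sigma\omega,\,a_{\omega_1}^{-1}\xi)$ on $\{1,\dots,m\}^{\mathbb{N}}\times (\mathbb{R}/\Lambda)$ (for an appropriate period $\Lambda$) equipped with the Bernoulli measure on the first factor, with an observable $\psi$ built from the one-step Fourier factor $\sum_i p_i e^{-\sqrt{-1}b_i\xi}$, and apply an analogous skew-product LDP.

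The final step is to prove the analog of observation (B): that $\widehat{R}(c)\to 0$ as $c\to 0+$. This is where the non-triviality of $K$ enters decisively: it forces the observable $\psi$ to attain its maximum value $0$ only on a discrete set (the $\xi$ where the phases $e^{-\sqrt{-1}b_i\xi}$ coincide). Any invariant measure $\nu$ with $\int\psi\,d\nu\ge -c$ is forced, for $c$ small, to concentrate mass near this discrete set, and by upper semi-continuity of entropy such concentrated measures have $h_\nu\to 0$, giving $\widehat{R}(c)\to 0$ and therefore the conclusion of the theorem.

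The main obstacle I anticipate is the technical implementation in the unequal-contraction regime: identifying an ambient space on which the skew product is sufficiently hyperbolic and carries a usable entropy theory, controlling the logarithmic singularities of $\psi$ where $\varphi$ vanishes (the same issue is flagged in the paper's footnote for the Cantor example), and calibrating the symbolic scale $n$ against the physical scale $t=\log|\xi|$ when the $|a_i|$ differ. The concentration-to-low-entropy step is essentially routine once the LDP framework is in place, so the setup of the LDP in this random/skew setting is the heart of the work.
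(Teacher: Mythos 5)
Your proposal reduces the theorem to a large--deviation principle for a skew-product renormalization dynamics, and you correctly flag the construction of that dynamics as ``the heart of the work'' --- but that is precisely the step that does not go through, and the paper's proof is built to avoid it entirely. Two obstructions. First, even in the equal-contraction case $a_i\equiv a$, the product formula $\mathcal{F}\mu(\xi)=\prod_k\varphi(a^k\xi)$ with $\varphi(\xi)=\sum_i p_ie^{-\sqrt{-1}b_i\xi}$ only descends to a circle $\mathbb{R}/\Lambda$ when $a^{-1}$ is an integer and the $b_i$ are commensurable; otherwise $\varphi$ has no period, the map $\xi\mapsto a^{-1}\xi$ on $\mathbb{R}$ carries no nontrivial invariant measures or entropy theory, and there is no compact space on which a Takahashi--Young LDP can be formulated. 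Second, and more fundamentally, when the $a_i$ are unequal the recursion $\mathcal{F}\mu(\xi)=\sum_i p_ie^{-\sqrt{-1}b_i\xi}\mathcal{F}\mu(a_i\xi)$ is a \emph{sum} of terms evaluated at different points $a_i\xi$; it does not factor, so $\log|\mathcal{F}\mu|$ is not a Birkhoff sum of any one-step observable along any skew product. The decay of $|\mathcal{F}\mu|$ in this case comes from destructive interference among the $m$ summands, and quantifying that interference is the actual mathematical content of the theorem --- your plan treats it as absorbed into the definition of $\psi$, which presupposes a factorization that does not exist.

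What the paper does instead: it approximates $\mu$ by the discrete measures $\mu_n=\sum_{\mathbf{i}}p_{\mathbf{i}}\,\delta_0\circ S_{\mathbf{i}}^{-1}$, introduces recursively defined majorants $Y_{\mathbf{i}}\ge|X_{\mathbf{i}}|$ (where $X_{\mathbf{i}}$ restricts $\mathcal{F}\mu_{n-k}$ to a scaled window) satisfying a Cauchy--Schwarz-type inequality $Y_{\mathbf{i}}(\xi)^2\le\sum_jp_jY_{\mathbf{i}\cdot j}(a_j\xi)^2$, and proves the key Lemma \ref{lm:main}: after normalizing $b_m-b_1=1$, the phase difference $\Theta_{1,m}(\xi)$ between the summands $T_1X_{\mathbf{i}\cdot1}$ and $T_mX_{\mathbf{i}\cdot m}$ rotates at derivative close to $1$, so on all but at most two subintervals of each partition element one gains a definite factor $e^{-\eta}$ in the squared inequality. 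A binomial counting argument then bounds the measure of the set where no gain accumulates, yielding $R(c;\mu)\le(s(\eta+\log\kappa)+2)/\log\check r$ and the conclusion by letting $\check r\to\infty$ through iterates of the system. If you want to salvage your approach, you would need to restrict to the arithmetic case (all $a_i^{-1}$ equal to a fixed integer, $b_i$ rational), where it does recover the sharper rate-function bound $\widehat R(c)$ of the introduction; for the general theorem you need a phase-decorrelation argument of the paper's type.
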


\section{Proof}

\subsection{Preliminaries}
Let $\mathcal{A}=\{1,2,\cdots, m\}$. Let $\mathbf{p}$ be the probability measure on $\mathcal{A}$ such that  $\mathbf{p}(\{i\})=p_i$.  
We denote by  $\mathcal{A}^n$ the set of words of length $n$ with letters in $\mathcal{A}$ and by  $\mathbf{p}^n$  the probability measure on it obtained as  the $n$-times direct product of $\mathbf{p}$.

We regard the system $\{(S_i, p_i)\}_{i\in \mathcal{A}}$ as a random dynamical system in which the mapping $S_i$ is applied with probability $p_i$. 
Then the measure $\mu$ in Theorem \ref{th:main} is the unique stationary measure for it.  
The $n$-th iteration of the random dynamical system $\{(S_i, p_i)\}_{i\in \mathcal{A}}$ is  the system $\{(S_{\mathbf{i}}, p_{\mathbf{i}})\}_{\mathbf{i}\in \mathcal{A}^n}$ where
\[
S_{\mathbf{i}}:\real\to \real, \quad S_{\mathbf{i}}=S_{i(n)}\circ S_{i(n-1)}\circ \cdots \circ S_{i(1)}
\]
and
\[
p_{\mathbf{i}}:=\mathbf{p}^n(\mathbf{i})=p_{i(1)}\cdot p_{i(2)}\cdots p_{i(n)}
\]
for $\mathbf{i}=(\mathbf{i}(n), \mathbf{i}(n-1), \cdots, \mathbf{i}(2), \mathbf{i}(1))\in \mathcal{A}^n$. 
The affine map $S_{\mathbf{i}}$ is expressed as
\[
S_{\mathbf{i}}(x)=a_{\mathbf{i}} \cdot x+b_{\mathbf{i}},
\]
and we have
\[
a_{\mathbf{i}}=\prod_{k=1}^{n}a_{\mathbf{i}(k)}.
\]
\begin{Remark}\label{rm:order}
We will order the element of  the sequence $\mathbf{i}\in \mathcal{A}^n$ from the right to the left:
\[
 \mathbf{i}=(\mathbf{i}(n), \mathbf{i}(n-1), \cdots, \mathbf{i}(2), \mathbf{i}(1)).
 \]
 This notation is more natural in our argument, though it is not very essential. 
\end{Remark}
Note that the measure $\mu$ is the unique invariant measure also for the iterations $\{(S_{\mathbf{i}}, p_{\mathbf{i}})\}_{\mathbf{i}\in \mathcal{A}^n}$ for $n\ge 1$. Below we develop our argument for the system $\{(S_i, p_i)\}_{i\in \mathcal{A}}$.  
But our argument is applicable to the iterations of $\{(S_i, p_i)\}_{i\in \mathcal{A}}$ in parallel and, in a few places, we will replace the system $\{(S_i, p_i)\}_{i\in \mathcal{A}}$ by its iterate in order to assume some numerical conditions. 
For instance, we may (and will) assume  
\begin{equation}\label{eq:a}
|a_i|<\frac{1}{2} \quad \mbox{for all $1\le i\le m$}
\end{equation}
without loss of generality, by such replacement. 

Any changes of coordinate on $\real$ by affine bijections do not affect validity of  the main theorem. Therefore we may and do  assume also
\begin{equation}
\label{eq:b}b_0=0\le b_1\le \cdots \le b_m=1.
\end{equation}
It is then easy to see that the invariant subset $K$ is contained in $[-2,2]$.

\subsection{The average rate of contraction}
The Lyapunov exponent of the random dynamical system $\{S_i, p_i\}_{i=1}^m$ is
\[
\chi:=\sum_{i=1}^m p_i\cdot  \log |a_i|<0.
\]
The following is a simple application of the large deviation principle\cite{Varadhan}.
\begin{Lemma}\label{lm1}For any $\delta>0$, there exists  $\epsilon>0$  such that 
\[
\mathbf{p}^n\{ \mathbf{i}\in \mathcal{A}^n\mid |a_{\mathbf{i}}| \le \exp((\chi-\delta)n) \} \le  e^{-\epsilon n}
\]
and
\[
\mathbf{p}^n\{ \mathbf{i}\in \mathcal{A}^n\mid |a_{\mathbf{i}}| \ge \exp((\chi+\delta)n)\} \le  e^{-\epsilon n}
\]
hold for sufficiently large  $n\ge 0$.
\end{Lemma}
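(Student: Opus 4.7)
The plan is to recognize the two bounds as standard Cram\'er-type large deviation estimates for a sum of i.i.d.~bounded real random variables and to invoke \cite{Varadhan} directly. Under the product measure $\mathbf{p}^n$ on $\mathcal{A}^n$, the coordinate functions $\mathbf{i}(1),\ldots,\mathbf{i}(n)$ are i.i.d.~with common distribution $\mathbf{p}$, and the identity $a_{\mathbf{i}}=\prod_{k=1}^n a_{\mathbf{i}(k)}$ gives
\[
\log |a_{\mathbf{i}}| \;=\; \sum_{k=1}^n \log|a_{\mathbf{i}(k)}|,
\]
a sum of i.i.d.~random variables each bounded by $\max_i |\log|a_i||<\infty$. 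Their common mean is $\sum_i p_i \log|a_i|=\chi$.

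After taking logarithms, the two events in the statement become one-sided deviations of the empirical mean from the true mean: $\{n^{-1}\log|a_{\mathbf{i}}|\le \chi-\delta\}$ and $\{n^{-1}\log|a_{\mathbf{i}}|\ge \chi+\delta\}$ respectively. Because the summands are bounded, the log-moment generating function is finite on all of $\real$, so Cram\'er's theorem supplies a convex rate function $I$ with $I(\chi)=0$ and $I(x)>0$ for $x\ne \chi$; each one-sided probability is therefore dominated by $\exp(-n\cdot I_\delta)$ where $I_\delta:=\inf\{I(x):|x-\chi|\ge \delta\}>0$. Taking $\epsilon = I_\delta/2$ then works for all sufficiently large $n$.

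The only degenerate possibility is $\log|a_1|=\cdots=\log|a_m|$, in which case $\log|a_{\mathbf{i}}|=n\chi$ identically and both probabilities vanish for every $\delta>0$, so the conclusion is trivial. Because the alphabet is finite and the summands bounded, there is no delicate point in the argument: the lemma is essentially a repackaging of the Chernoff bound, and I do not expect any genuine obstacle.
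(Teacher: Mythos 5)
Your proposal is correct and matches the paper's intent exactly: the paper states Lemma~\ref{lm1} without a written proof, merely noting that it is ``a simple application of the large deviation principle''\cite{Varadhan}, and your reduction to Cram\'er's theorem for the i.i.d.~bounded sums $\log|a_{\mathbf{i}}|=\sum_{k}\log|a_{\mathbf{i}(k)}|$ with mean $\chi$ is precisely that application. Your additional handling of the degenerate case where all $\log|a_i|$ coincide is a harmless bonus not needed for the bound.
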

Take and fix integers $0<\check{r}<\hat{r}$ with $\hat{r}-\check{r}\le 2$ so that 
\[
\check{r}<e^{|\chi|}<\hat{r}.
\]
As an immediate consequence of the last lemma, we get
\begin{Corollary}\label{cor}There exists a constant $\epsilon>0$ such that 
\[
\mathbf{p}^n\{ \mathbf{i}\in \mathcal{A}^n\mid |a_{\mathbf{i}}| \ge \check{r}^{-n}\mbox{ or } |a_{\mathbf{i}}| \le \hat{r}^{-n} \} \le  e^{-\epsilon n}
\]
for sufficiently large  $n\ge 0$.
\end{Corollary}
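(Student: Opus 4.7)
The plan is to reduce both events in the corollary's union to instances of Lemma~\ref{lm1}, and then apply a union bound. First I would rewrite the conditions in terms of the averaged log-contraction $\frac{1}{n}\log|a_{\mathbf{i}}|$: the inequality $|a_{\mathbf{i}}| \ge \check{r}^{-n}$ is equivalent to $\frac{1}{n}\log|a_{\mathbf{i}}| \ge -\log\check{r}$, and $|a_{\mathbf{i}}|\le \hat{r}^{-n}$ is equivalent to $\frac{1}{n}\log|a_{\mathbf{i}}| \le -\log\hat{r}$.

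Next, I would exploit the choice of $\check{r},\hat{r}$. Since $\chi<0$ and $\check{r}<e^{|\chi|}<\hat{r}$, we have $-\log\hat{r} < \chi < -\log\check{r}$. Setting
\[
\delta:=\min\bigl(\,-\log\check{r}-\chi,\; \log\hat{r}+\chi\,\bigr)>0,
\]
both thresholds $-\log\check{r}$ and $-\log\hat{r}$ stay a definite distance from $\chi$: the event $\frac{1}{n}\log|a_{\mathbf{i}}| \ge -\log\check{r}$ is contained in $\{|a_{\mathbf{i}}|\ge \exp((\chi+\delta)n)\}$, and $\frac{1}{n}\log|a_{\mathbf{i}}| \le -\log\hat{r}$ is contained in $\{|a_{\mathbf{i}}|\le \exp((\chi-\delta)n)\}$.

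Now I apply Lemma~\ref{lm1} with this $\delta$ to obtain some $\epsilon_0>0$ so that each of the two events has $\mathbf{p}^n$-measure at most $e^{-\epsilon_0 n}$ for $n$ large. A union bound yields $2e^{-\epsilon_0 n}$, which is at most $e^{-\epsilon n}$ for any $0<\epsilon<\epsilon_0$ once $n$ is large enough. There is no real obstacle here; the corollary is essentially a translation of Lemma~\ref{lm1} using the fact that $-\log\check{r}$ and $-\log\hat{r}$ strictly bracket $\chi$.
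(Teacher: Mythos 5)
Your proof is correct and is exactly the argument the paper intends: the corollary is stated as an immediate consequence of Lemma~\ref{lm1} with no written proof, and the translation via $-\log\hat{r}<\chi<-\log\check{r}$, the choice of $\delta$ as the smaller of the two gaps, and the union bound is the standard way to make that immediacy explicit. No issues.
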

Notice that, replacing $\{S_i, p_i\}_{i=1}^m$ by its iteration, we may assume that the integers $\hat{r}$ and $\check{r}$ are arbitrarily large and that the rate $\hat{r}/\check{r}$ is arbitrarily close to $1$.

\subsection{The sequence of measures $\mu_n$ and their Fourier transform}
Let $\delta_0$ be the Dirac measure at the origin $0\in \real$ and set 
\[
\mu_n=\sum_{\mathbf{i}\in \mathcal{A}^n} p_{\mathbf{i}}\cdot \delta_0\circ S_\mathbf{i}^{-1}
\quad \mbox{for $n\ge 0$.}
\]
The measure $\mu_n$ converges to $\mu$ as $n\to\infty$ in the weak topology and hence 
its Fourier transform $\mathcal{F}\mu_n(\xi)$ converges to $\mathcal{F}\mu (\xi)$ for each  $\xi\in \real$. The convergence is actually uniform on any compact subset. Further we have
 \begin{Lemma}\label{lm:apx} There exists a constant $\epsilon>0$ such that, for sufficiently large $n$, we have
\[
|\mathcal{F}\mu (\xi) -\mathcal{F}\mu_n(\xi)|\le e^{-\epsilon n}\quad \mbox{uniformly for $\xi\in \real^d$ with $|\xi|\le \check{r}^n$.}
\] 
\end{Lemma}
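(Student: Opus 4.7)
The plan is to exploit the $n$-fold iterated self-similarity $\mu = \sum_{\mathbf{i} \in \mathcal{A}^n} p_{\mathbf{i}}\cdot \mu \circ S_{\mathbf{i}}^{-1}$ together with the explicit formula $\mu_n = \sum_{\mathbf{i} \in \mathcal{A}^n} p_{\mathbf{i}}\,\delta_{b_{\mathbf{i}}}$. Taking Fourier transforms of both and subtracting yields the basic identity
\[
\mathcal{F}\mu(\xi)-\mathcal{F}\mu_n(\xi)=\sum_{\mathbf{i}\in \mathcal{A}^n} p_{\mathbf{i}}\, e^{-\sqrt{-1}\,\xi b_{\mathbf{i}}}\bigl(\mathcal{F}\mu(a_{\mathbf{i}}\xi)-1\bigr),
\]
so the task reduces to showing that this sum is exponentially small uniformly in $|\xi|\le \check{r}^n$.

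Two elementary bounds on $\mathcal{F}\mu(\eta)-1$ will be combined. Since $K\subset [-2,2]$, a one-step Taylor estimate of $\eta\mapsto e^{-\sqrt{-1}\,\eta x}$ at $\eta=0$ gives the linear bound $|\mathcal{F}\mu(\eta)-1|\le 2|\eta|$, while globally $|\mathcal{F}\mu(\eta)-1|\le 2$. Next I would fix $\delta>0$ small enough that $\check{r}\,e^{\chi+\delta}<1$; this is possible precisely because $\check{r}<e^{|\chi|}=e^{-\chi}$. Split $\mathcal{A}^n$ into the typical set $\mathcal{T}_n=\{\mathbf{i}:|a_{\mathbf{i}}|\le e^{(\chi+\delta)n}\}$ and its complement.

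For $\mathbf{i}\in \mathcal{T}_n$ and $|\xi|\le \check{r}^n$ the linear bound yields $|\mathcal{F}\mu(a_{\mathbf{i}}\xi)-1|\le 2|a_{\mathbf{i}}\xi|\le 2(\check{r}\,e^{\chi+\delta})^n$, so the typical contribution is at most $2(\check{r}\,e^{\chi+\delta})^n$, which decays exponentially by the choice of $\delta$. For the atypical terms, Lemma \ref{lm1} applied with parameter $\delta$ provides $\mathbf{p}^n(\mathcal{A}^n\setminus \mathcal{T}_n)\le e^{-\epsilon' n}$ for some $\epsilon'>0$; using only the trivial bound $|\mathcal{F}\mu(a_{\mathbf{i}}\xi)-1|\le 2$ on this piece gives a further contribution of at most $2 e^{-\epsilon' n}$. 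Combining the two regimes and taking $\epsilon$ to be any positive number less than $\min\bigl(|\log(\check{r}\,e^{\chi+\delta})|,\,\epsilon'\bigr)$ delivers the lemma.

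The one subtlety worth flagging is the choice of bound on the atypical set: using the linear estimate there would introduce a factor $|a_{\mathbf{i}}\xi|$ with $|a_{\mathbf{i}}|$ only controlled by the trivial bound $(1/2)^n$, and $\check{r}^n (1/2)^n$ need not decay—so the $L^\infty$ bound on $\mathcal{F}\mu-1$ is essential there. Otherwise no real obstacle arises; the proof is simply a two-regime decomposition calibrated by the large-deviation estimate of Lemma \ref{lm1}.
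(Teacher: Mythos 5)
Your proof is correct and follows essentially the same route as the paper: the same identity from the $n$-fold invariance of $\mu$, a split of $\mathcal{A}^n$ into typical and atypical words via Lemma \ref{lm1}, a linear (Lipschitz-in-$\xi$) bound $|\mathcal{F}\mu(a_{\mathbf{i}}\xi)-1|\lesssim |a_{\mathbf{i}}\xi|$ on the typical part, and the trivial $L^\infty$ bound on the exponentially improbable remainder. Your threshold $|a_{\mathbf{i}}|\le e^{(\chi+\delta)n}$ is just a reparametrization of the paper's choice of $r$ with $\check{r}<r<e^{|\chi|}$, so the two arguments coincide.
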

\begin{proof} From invariance of $\mu$, we have 
\begin{equation}\label{eq:Fm}
\mathcal{F}\mu (\xi) -\mathcal{F}\mu_n(\xi)=
\sum_{\mathbf{i}\in \mathcal{A}^n}p_{\mathbf{i}}\cdot 
\int e^{-\sqrt{-1} \xi \cdot x} d\left((\mu-\delta_0) \circ S_{\mathbf{i}}^{-1}\right)(x).
\end{equation}
Take a real number $r$ in between $\check{r}$ and $e^{|\chi|}$. From Lemma \ref{lm1}, we see that 
\[
\mathbf{p}^n\{ \mathbf{i}\in \mathcal{A}^n\mid |a_{\mathbf{i}}| \ge r^{-n} \} 
\]
is exponentially small. That is, the sum in (\ref{eq:Fm})  restricted to $\mathbf{i}\in \mathcal{A}^n$ with $|a_{\mathbf{i}}|\ge r^{-n}$ is exponentially small with respect to $n$. 
If $|a_{\mathbf{i}}|<r^{-n}$, we have that 
\[
\left|\int e^{-\sqrt{-1}\xi \cdot x} d\left((\delta_0-\mu) \circ S_{\mathbf{i}}^{-1}\right)(x)\right|< |\xi| \cdot r^{-n}\cdot \mathrm{diam} K< 4(\check{r}/r)^n
\]
provided $|\xi|\le \check{r}^n$.  Therefore the remaining part of the sum  (\ref{eq:Fm}) is also exponentially small with respect to $n$. 
\end{proof}

To continue, we introduce the operators 
\[
T_i:C^\infty(\real)\to C^\infty(\real), \quad T_i u(\xi)=e^{-\sqrt{-1}b_i\xi} \cdot u(a_{i}\cdot \xi)
\]
defined for $i\in \mathcal{A}$. 
Since  $
\mathcal{F}(\nu\circ S_i^{-1})=T_i (\mathcal{F} \nu)$, 
we have
\begin{equation}\label{eq:relation_Fmu_k}
\mathcal{F}\mu_{k+1}=\sum_{i\in \mathcal{A}} p_i\cdot T_i (\mathcal{F}\mu_{k}).
\end{equation}
Recursive application of this relation yields
\[
\mathcal{F}\mu_n=\sum_{\mathbf{i}\in \mathcal{A}^n}p_{\mathbf{i}}\cdot T_{\mathbf{i}}(\mathcal{F}\delta_0)=\sum_{\mathbf{i}\in \mathcal{A}^n}p_{\mathbf{i}}\cdot T_{\mathbf{i}}(\mathbf{1})
\]
where $
T_{\mathbf{i}} =T_{\mathbf{i}(n)}\circ T_{\mathbf{i}(n-1)}\circ \cdots \circ T_{\mathbf{i}(1)}$. 

Let $n$ be a large positive integer. Notice that below we drop the dependence on $n$ from the notation for simplicity, though most of our constructions depend on $n$.
For  $\mathbf{i}\in \mathcal{A}^k$ with $0\le k\le n$, let  $
X_{\mathbf{i}}=X_{\mathbf{i}}^{(n)}:I(\mathbf{i})\to \mathbb{C}$
be the restriction of the function $\mathcal{F}\mu_{n-k}(\xi)$ to the interval 
\[
I(\mathbf{i}):=[-|a_{\mathbf{i}}|\cdot \check{r}^n, |a_\mathbf{i}|\cdot \check{r}^n].
\]
For convenience, we suppose that $\mathcal{A}^0$ as the set that consists of the single element $\emptyset$ and that $a_\emptyset=1$.
\begin{Remark}
The function $X_{\mathbf{i}}(\xi)$ and the interval $I(\mathbf{i})$ actually depend only on the length $k=|\mathbf{i}|$ of the sequence $\mathbf{i}$ and the number $a_{\mathbf{i}}$. 
\end{Remark}

From the definition, we have that 
\begin{equation}\label{eq:init}
X_{\mathbf{i}}(\xi)=\mathcal{F}\delta_0(\xi)\equiv 1\quad \mbox{for all $\mathbf{i}\in \mathcal{A}^n$}
\end{equation}
and also that
\[
X_{\emptyset}(\xi)=\mathcal{F}\mu_{n}(\xi). 
\]
For two sequences $\mathbf{i}\in \mathcal{A}^k$ and $\mathbf{j}\in \mathcal{A}^{k'}$, let $\mathbf{i}\cdot \mathbf{j}\in \mathcal{A}^{k+k'}$ be the concatenation of them:
\[
\mathbf{i}\cdot \mathbf{j}(\ell)=\begin{cases} \mathbf{j}(\ell),&\quad \mbox{if $1\le \ell\le k'$;}\\
\mathbf{i}(\ell-k'), &\quad \mbox{if $k'+1\le \ell\le k'+k$.}
\end{cases}
\]
(Recall Remark \ref{rm:order}.)
Then, from (\ref{eq:relation_Fmu_k}), the functions $X_{\mathbf{i}}(\xi)$ for $\mathbf{i}\in \mathcal{A}^k$ with $0\le k\le n-1$ satisfy\begin{equation}\label{eq:relX}
X_{\mathbf{i}}(\xi)=\sum_{j\in \mathcal{A}}p_j\cdot T_{j}X_{\mathbf{i}\cdot j}(\xi)
\end{equation}
or more generally 
\begin{equation}\label{eq:relX2}
X_{\mathbf{i}}(\xi)=\sum_{\mathbf{j}\in \mathcal{A}^{k'}}p_{\mathbf{j}}\cdot T_{\mathbf{j}}X_{\mathbf{i}\cdot \mathbf{j}}(\xi)
\end{equation}
for $1\le k'\le n-k$.
\subsection{The auxiliary quantity $Y_{\mathbf{i}}(\cdot)$}
Our task is to analyze how the summands on the right hand side of (\ref{eq:relX}) cancel each other by the difference of the complex phase. One technical problem in such analysis is that, if a few of the summands are much larger in absolute value than the others  and they have coherent complex phases, the cancellation will not be effective. In order to deal with such problem, we introduce another  family of positive real-valued functions 
\[
Y_{\mathbf{i}}:I(\mathbf{i})\to \mathbb{R}_+
\]
for $\mathbf{i}\in \mathcal{A}^k$ with $0\le k\le n$. 
For $\mathbf{i}\in \mathcal{A}^n$, we set 
\begin{equation}\label{eq:YXn}
Y_{\mathbf{i}}(\xi)=X_{\mathbf{i}}(\xi)\equiv 1\quad \mbox{on $I(\mathbf{i})$.}
\end{equation}
Then we define $Y_{\mathbf{i}}(\xi)$ for $\mathbf{i}\in \mathcal{A}^k$ inductively (in descending order in $k$) by the relation 
\[
Y_{\mathbf{i}}(\xi)=\max\left\{\,\frac{1}{2}\sum_{j=1}^{m}p_j\cdot Y_{\mathbf{i}\cdot j}(a_j\cdot \xi),\; |X_{\mathbf{i}}(\xi)| \,\right\}.
\]
From this definition and the relation (\ref{eq:relX}), we see that 
\[
 |X_{\mathbf{i}}(\xi)|\le Y_{\mathbf{i}}(\xi)\le 1
\]
and that
\begin{equation}\label{eq:relY}
\frac{1}{2}\sum_{j\in \mathcal{A}}p_j\cdot Y_{\mathbf{i}\cdot j}(a_j\cdot \xi)\le Y_{\mathbf{i}}(\xi)
\le 
\sum_{j\in \mathcal{A}}p_j\cdot Y_{\mathbf{i}\cdot j}(a_j\cdot \xi).
\end{equation}
The latter inequality gives by induction that
\begin{equation}\label{eq:relY2}
\frac{1}{2^\ell}\sum_{\mathbf{j}\in \mathcal{A}^\ell}p_{\mathbf{j}}\cdot Y_{\mathbf{i}\cdot \mathbf{j}}(a_{\mathbf{j}}\cdot \xi)\le Y_{\mathbf{i}}(\xi)
\le 
\sum_{\mathbf{j}\in \mathcal{A}^\ell} p_{\mathbf{j}}\cdot Y_{\mathbf{i}\cdot \mathbf{j}}(a_{\mathbf{j}}\cdot \xi)
\end{equation}
for $\mathbf{i}\in \mathcal{A}^{k}$ with $0\le k\le n-1$ and for $1\le \ell\le n-k$.

The next lemma is the main reason to introduce  the functions $Y_{\mathbf{i}}(\cdot)$. (See the remark below.)
\begin{Lemma}\label{lm:Lip} For any $\mathbf{i}\in \mathcal{A}^k$ with $0\le k\le n$, we have
\[
\left|\frac{d}{d\xi}X_{\mathbf{i}}(\xi)\right| \le \frac{2}{1-(2\max_{i\in \mathcal{A}}|a_i|)}\cdot  Y_{\mathbf{i}}(\xi)\quad \mbox{for any $\xi\in  I(\mathbf{i})$}
\]
and
\[
\left|\frac{d}{d\xi}Y_{\mathbf{i}}(\xi)\right| \le \frac{2}{1-(2\max_{i\in \mathcal{A}}|a_i|)}\cdot  Y_{\mathbf{i}}(\xi)\quad \mbox{for Lebesgue almost all  $\xi\in  I(\mathbf{i})$}
\]
where the differential  in the second inequality is considered in the sense of distribution. (Note that $Y_{\mathbf{i}}$ satisfies the Lipschitz condition and hence $\frac{d}{d\xi}Y_{\mathbf{i}}\in L^\infty$.)
\end{Lemma}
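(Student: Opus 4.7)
The plan is to prove both estimates simultaneously by induction on the length $k=|\mathbf{i}|$ in descending order from $k=n$ down to $k=0$. The base case $k=n$ is immediate from (\ref{eq:init}) and (\ref{eq:YXn}): both $X_{\mathbf{i}}$ and $Y_{\mathbf{i}}$ are constantly equal to $1$, so their derivatives vanish. Set $\alpha=\max_{i\in\mathcal{A}}|a_i|$ and $C=2/(1-2\alpha)$; note $\alpha<1/2$ by (\ref{eq:a}), so $C>0$.

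For the inductive step I would first bound $|X'_{\mathbf{i}}|$. Differentiating the relation (\ref{eq:relX}), which in unpacked form reads $X_{\mathbf{i}}(\xi)=\sum_{j\in\mathcal{A}}p_j\, e^{-\sqrt{-1}b_j\xi}X_{\mathbf{i}\cdot j}(a_j\xi)$, produces a sum of two terms: one coming from differentiating the phase $e^{-\sqrt{-1}b_j\xi}$, bounded using $|b_j|\le 1$ (from the normalization (\ref{eq:b})) together with $|X_{\mathbf{i}\cdot j}|\le Y_{\mathbf{i}\cdot j}$ (built into the definition of $Y$); and one coming from differentiating the argument $a_j\xi$ of $X_{\mathbf{i}\cdot j}$, bounded by the inductive hypothesis $|X'_{\mathbf{i}\cdot j}|\le CY_{\mathbf{i}\cdot j}$ times an extra factor $|a_j|\le\alpha$. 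Both are then controlled via the left-hand inequality of (\ref{eq:relY}), which gives $\sum_{j}p_j Y_{\mathbf{i}\cdot j}(a_j\xi)\le 2 Y_{\mathbf{i}}(\xi)$, yielding
\[
|X'_{\mathbf{i}}(\xi)|\le (1+\alpha C)\cdot 2\, Y_{\mathbf{i}}(\xi).
\]
The choice $C=2/(1-2\alpha)$ is exactly what makes $2(1+\alpha C)\le C$, closing the induction for $X$.

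For $Y_{\mathbf{i}}$, write $Y_{\mathbf{i}}(\xi)=\max\{A(\xi),B(\xi)\}$ with $A(\xi)=\tfrac12\sum_j p_j Y_{\mathbf{i}\cdot j}(a_j\xi)$ and $B(\xi)=|X_{\mathbf{i}}(\xi)|$. Both $A$ and $B$ are Lipschitz ($B$ as the modulus of a smooth complex function), so their pointwise maximum is Lipschitz and its distributional derivative is a.e.\ bounded by $\max(|A'|,|B'|)$. The inductive hypothesis on $|Y'_{\mathbf{i}\cdot j}|$ combined again with $\sum p_j Y_{\mathbf{i}\cdot j}(a_j\xi)\le 2Y_{\mathbf{i}}(\xi)$ gives $|A'(\xi)|\le\alpha C\,Y_{\mathbf{i}}(\xi)$, while the elementary inequality $\bigl|\tfrac{d}{d\xi}|X_{\mathbf{i}}|\bigr|\le |X'_{\mathbf{i}}|$ together with the bound on $|X'_{\mathbf{i}}|$ just established gives $|B'(\xi)|\le C\,Y_{\mathbf{i}}(\xi)$. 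Since $\alpha<1$, we conclude $|Y'_{\mathbf{i}}|\le C\,Y_{\mathbf{i}}$ a.e., closing the induction for $Y$.

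The argument is a short computation once the constants are chosen, and I do not anticipate a serious obstacle. The only mildly delicate points are the standard facts that, for Lipschitz $A,B$ and smooth complex $f$, one has a.e.\ $\bigl|\tfrac{d}{d\xi}\max(A,B)\bigr|\le\max(|A'|,|B'|)$ and $\bigl|\tfrac{d}{d\xi}|f|\bigr|\le|f'|$; both are elementary. The real cleverness is already encoded in the definition of $Y_{\mathbf{i}}$: the factor $\tfrac12$ in front of the sum creates precisely the slack needed to absorb the derivative of the oscillatory phase $e^{-\sqrt{-1}b_j\xi}$ coming from $T_j$, which is the source of the additive $1$ in $1+\alpha C$.
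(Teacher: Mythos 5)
Your proof is correct and is essentially the paper's argument in inductive form: the paper bounds $|X_{\mathbf{i}}'|$ by unrolling the recursion (\ref{eq:relX2}) all the way down and summing the geometric series $\sum_{\ell\ge 1}2(2\max_i|a_i|)^{\ell-1}=2/(1-2\max_i|a_i|)$, which is exactly the fixed-point computation $2(1+\alpha C)=C$ closing your induction, and it likewise handles $Y_{\mathbf{i}}$ by "a similar inductive argument" using the first claim. No gap.
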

\begin{Remark} The second inequality implies that the functions $Y_{\mathbf{i}}$ are tame in the sense that  the differentials of $\log Y_{\mathbf{i}}$ are bounded uniformly. The first inequality implies that, if $|X_{\mathbf{i}}|$ is comparable with $Y_{\mathbf{i}}$,  the differential of $\log X_{\mathbf{i}}$ is also bounded. The case where $|X_{\mathbf{i}}|$ is much smaller than $Y_{\mathbf{i}}$ will be treated in a different manner. 
\end{Remark}
\begin{proof}Applying  the chain rule to the inductive relation (\ref{eq:relX}) and using the assumption $|b_i|\le 1$, we see
\begin{align*}
\left|\frac{d}{d\xi}X_{\mathbf{i}}(\xi)\right|&\le \sum_{\ell=1}^{n-k} \sum_{\mathbf{j}\in \mathcal{A}^{\ell}} p_{\mathbf{j}}\cdot  \frac{ |a_{\mathbf{j}}|\cdot |b_{\mathbf{j}(1)}|}{|a_{\mathbf{j}(1)}|} \cdot |X_{\mathbf{i}\cdot \mathbf{j}}(a_{\mathbf{j}}\cdot \xi)|\\
&\le 
\sum_{\ell=1}^{n-k} \sum_{\mathbf{j}\in \mathcal{A}^{\ell}} p_{\mathbf{j}}\cdot 
\frac{ |a_{\mathbf{j}}|}{|a_{\mathbf{j}(1)}|} \cdot Y_{\mathbf{i}\cdot \mathbf{j}}(a_{\mathbf{j}}\cdot \xi)\\
&\le 
\sum_{\ell=1}^{n-k}2\cdot \left(2\max_{i\in \mathcal{A}}|a_i|\right)^{\ell-1}
 \cdot Y_{\mathbf{i}}(\xi)\quad \mbox{by (\ref{eq:relY2})}\\
 &\le \frac{2}{1-(2\max_{i\in \mathcal{A}}|a_i|)}\cdot Y_{\mathbf{i}}(\xi).
\end{align*}
We can get the second claim by a similar inductive argument on the length of $\mathbf{i}$, using the first claim. 
\end{proof}

\subsection{The main estimate}

For $\mathbf{i}\in \mathcal{A}^k$ with $0\le k\le n$,  
let $\Xi_{\mathbf{i}}$ be the partition of the interval $[-\check{r}^n, \check{r}^n)$ defined as follows: For $\mathbf{i}=\emptyset\in \mathcal{A}^0$, let $\Xi_{\emptyset}$ be the partition of the interval $I(\emptyset)=[-\check{r}^n, \check{r}^n)$ into the unit  intervals
\[
[k, k+1)\quad \quad \mbox{for $-\check{r}^n \le k< \check{r}^n$.}
\] 
Then we construct the partition $\Xi_{\mathbf{i}}$ for 
$\mathbf{i}\in \mathcal{A}^k$ so that 
each element of $\Xi_{\mathbf{i}\cdot j}$ is a union of consecutive intervals in the partition $\Xi_{\mathbf{i}}$ and that 
\[
\min\{|a_{\mathbf{i}}|^{-1}, 2\check{r}^n\}\le |I|\le 2|a_{\mathbf{i}}|^{-1}\quad \mbox{for $I\in \Xi_{\mathbf{i}}$.}
\]
 Such partition is not unique of course. But any of such partitions will work in the following argument. Note that, when $|a_{\mathbf{i}}|^{-1}\ge 2\check{r}^n$, the partition $\Xi_{\mathbf{i}}$ consists of the single interval $[-\check{r}^n, \check{r}^n)$ and, otherwise, we have
 \[
|a_{\mathbf{i}}|^{-1}\le |I|\le 2|a_{\mathbf{i}}|^{-1}\quad \mbox{for $I\in \Xi_{\mathbf{i}}$.}
 \]

The next lemma deals with the technical part of the proof of Theorem \ref{th:main}. 
\begin{Lemma}\label{lm:main}
There exists $\eta>0$, independent of $n$,  such that the following holds true:
Consider arbitrary $j\in \mathcal{A}$,  $\mathbf{i}\in \mathcal{A}^k$ with $0\le k\le n-1$ and an interval  $I\in \Xi_{\mathbf{i}\cdot j}$.  Suppose that 
the interval $I$ is the union of consecuetive intervals  $I_1, I_2, \cdots, I_N\in \Xi_{\mathbf{i}}$. Set
\begin{equation}\label{eq:Id}
I'_\nu:=a_{\mathbf{i}\cdot j}\cdot I_\nu\subset I':=a_{\mathbf{i}\cdot j}\cdot I\quad \mbox{ for $1\le \nu\le N$.}
\end{equation}
Then we have
\begin{equation}\label{eq:mainest}
Y_{\mathbf{i}}(\xi)^2
\le \exp(-\eta) \cdot \sum_{j\in \mathcal{A}} p_j\cdot Y_{\mathbf{i}\cdot j}(a_j\cdot \xi)^2\quad \mbox{for $\xi \in I'_\nu$}
\end{equation}
for all $1\le \nu \le N$ but for at most two exceptions. 
\end{Lemma}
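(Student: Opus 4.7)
The plan is to split the argument according to which term attains the maximum in $Y_{\mathbf{i}}(\xi)=\max\{\tfrac{1}{2}\sum_j p_j Y_{\mathbf{i}\cdot j}(a_j\xi),\,|X_{\mathbf{i}}(\xi)|\}$ and then to show that the ``bad set'' $E := \{\xi\in I' : Y_{\mathbf{i}}(\xi)^2 > e^{-\eta}\sum_j p_j Y_{\mathbf{i}\cdot j}(a_j\xi)^2\}$ is contained in an interval of length at most $\min_j|a_j|$. Since the construction of $\Xi_{\mathbf{i}}$ forces $|I'_\nu| \ge \min_j|a_j|$ in the nontrivial case $N\ge 2$, any such interval meets at most two consecutive $I'_\nu$, which gives the conclusion. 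In the case $Y_{\mathbf{i}}(\xi) = \tfrac{1}{2}\sum_j p_j Y_{\mathbf{i}\cdot j}(a_j\xi)$, the ordinary Cauchy--Schwarz inequality yields $Y_{\mathbf{i}}(\xi)^2 \le \tfrac{1}{4}\sum_j p_j Y_{\mathbf{i}\cdot j}(a_j\xi)^2$ for free, so for $\eta\le\log 4$ every $\xi\in E$ must lie in the other case $Y_{\mathbf{i}}(\xi)=|X_{\mathbf{i}}(\xi)|$.

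Setting $f_j(\xi):=e^{-\sqrt{-1}b_j\xi}X_{\mathbf{i}\cdot j}(a_j\xi)$, so that $X_{\mathbf{i}}=\sum_j p_j f_j$ and $|f_j(\xi)|\le Y_{\mathbf{i}\cdot j}(a_j\xi)$, the Cauchy--Schwarz deficit splits as
\[
\sum_j p_j Y_{\mathbf{i}\cdot j}(a_j\xi)^2 - |X_{\mathbf{i}}(\xi)|^2 = \sum_j p_j\bigl(Y_{\mathbf{i}\cdot j}(a_j\xi)^2 - |f_j(\xi)|^2\bigr) + \tfrac{1}{2}\sum_{j,k}p_j p_k |f_j(\xi)-f_k(\xi)|^2.
\]
On $E$ the left side is bounded by $\eta\,M(\xi)^2$ where $M(\xi):=\max_j Y_{\mathbf{i}\cdot j}(a_j\xi)$, so each non-negative summand on the right is $\le \eta M^2$. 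Applying the analogous identity to the real vector $(Y_{\mathbf{i}\cdot j}(a_j\xi))_j$ and using the trivial estimate $\sum_j p_j Y_{\mathbf{i}\cdot j}(a_j\xi) \ge |X_{\mathbf{i}}(\xi)|$ controls $\tfrac{1}{2}\sum_{j,k}p_jp_k(Y_{\mathbf{i}\cdot j}(a_j\xi)-Y_{\mathbf{i}\cdot k}(a_k\xi))^2$ by the same quantity $\eta M^2$. Choosing $\eta$ sufficiently small in terms of $\min_j p_j$, these three estimates combine to force, at every point of $E$, the comparability relations $Y_{\mathbf{i}\cdot j}(a_j\xi) \in [3M/4,M]$ and $|f_j(\xi)|\ge M/\sqrt{2}$ for all $j$, and the key closeness bound $|f_1(\xi)-f_m(\xi)| \le M\sqrt{2\eta/(p_1 p_m)}$.

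The final step exploits the frequency separation $b_m - b_1 = 1$ provided by \eqref{eq:b}. Writing $g_j(\xi) := X_{\mathbf{i}\cdot j}(a_j\xi)$, the last bound reads $|g_1(\xi) - e^{-\sqrt{-1}\xi}g_m(\xi)| \le M\sqrt{2\eta/(p_1p_m)}$ on $E$. Evaluating this at two points $\xi_0,\xi_1\in E$ and subtracting produces
\[
\bigl|e^{-\sqrt{-1}\xi_1}-e^{-\sqrt{-1}\xi_0}\bigr|\cdot|g_m(\xi_0)| \le |g_1(\xi_1)-g_1(\xi_0)| + |g_m(\xi_1)-g_m(\xi_0)| + 2M\sqrt{2\eta/(p_1p_m)},
\]
and Lemma \ref{lm:Lip} bounds $|g_j'(\xi)| \le |a_j|\,C\,Y_{\mathbf{i}\cdot j}(a_j\xi)$ with $C=2/(1-2\max_i|a_i|)$; here $M$ is essentially constant on the (necessarily short) interval $[\xi_0,\xi_1]$ by the log-Lipschitz bound on $Y_{\mathbf{i}\cdot j}$. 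Using $|g_m(\xi_0)|\ge M/\sqrt{2}$ on the left and the elementary bound $|e^{-\sqrt{-1}\xi_1}-e^{-\sqrt{-1}\xi_0}|\ge (2/\pi)|\xi_1-\xi_0|$ (valid for $|\xi_1-\xi_0|\le \pi$, which always holds since $|I'|\le 2$), one arrives at $|\xi_1-\xi_0|\bigl(\sqrt{2}/\pi - 2\max_i|a_i|\cdot C\bigr) \le K'\sqrt{\eta}$ for an explicit $K'$ depending only on $\min_j p_j$. After replacing $\{(S_i,p_i)\}_{i\in\mathcal{A}}$ by a sufficiently high iterate so that $\max_i|a_i|\cdot C < \sqrt{2}/(2\pi)$ (as the remark following Corollary \ref{cor} permits), the bracket is bounded away from zero, giving $\mathrm{diam}(E)\le K\sqrt{\eta}$ for an absolute constant $K$; choosing $\eta$ further so that $K\sqrt{\eta}\le \min_j|a_j|$ completes the proof. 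The main subtlety is not the final oscillatory estimate but rather extracting the full package of comparability conditions from the hypothesis $\xi\in E$: one must simultaneously exploit \emph{both} non-negative summands in the Cauchy--Schwarz identity to enter the regime where $|X_{\mathbf{i}\cdot 1}(a_1\xi)|$ and $|X_{\mathbf{i}\cdot m}(a_m\xi)|$ are uniformly comparable to $M(\xi)$, without which the comparison between $f_1$ and $f_m$ would carry no quantitative content.
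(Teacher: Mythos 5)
Your proposal is correct, and it runs on the same engine as the paper's proof: the Cauchy--Schwarz deficit identity, the Lipschitz control of $X_{\mathbf{i}\cdot j}$ and $Y_{\mathbf{i}\cdot j}$ from Lemma \ref{lm:Lip}, the frequency separation $b_m-b_1=1$, and the passage to an iterate to make $\max_i|a_i|/(1-2\max_i|a_i|)$ small. The organization, however, is genuinely different and worth noting. The paper runs an a priori trichotomy over the whole interval $I'$ --- condition (I) (some $Y_{\mathbf{i}\cdot j}$ dominated by another everywhere on $I'$), condition (II) (some $|X_{\mathbf{i}\cdot j_0}|\le Y_{\mathbf{i}\cdot j_0}/2$ everywhere on $I'$), and the residual case --- and only in the residual case invokes the phase argument, there in differential form: $\frac{d}{d\xi}\Theta_{1,m}$ is shown to be close to $1$, so $\cos\Theta_{1,m}\le 1-\rho$ off a short subinterval. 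You instead work pointwise on the exceptional set $E$, extracting the full comparability package ($Y_{\mathbf{i}\cdot j}\ge 3M/4$, $|f_j|\ge M/\sqrt2$, $|f_1-f_m|\lesssim M\sqrt\eta$) directly from the quantitative failure of \eqref{eq:mainest} via the two nonnegative summands of the deficit identity, and then run the phase argument in integrated, two-point form to bound $\mathrm{diam}(E)\le K\sqrt\eta$. This buys a cleaner logical structure (the "easy cases" are absorbed rather than enumerated, and one never needs the sample points $\bar\xi(j,j')$ and $\xi_j$ of the paper) at the cost of a slightly more delicate bookkeeping of where $M$ is evaluated; you correctly flag that $M$ varies only by the factor $\alpha$ of \eqref{eq:Ydis} over $I'$, which is all that is needed. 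One small point to make explicit when writing this up: the constants must be fixed in the order (iterate first, then $\eta$), since $K$ depends on $p_1p_m$ and $\min_j|a_j|$ of the iterated system; as you set things up this causes no circularity, and it matches the paper's own convention.
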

\begin{proof} We may and do assume $N>2$ because the claim is vacuous otherwise. 
This implies  $|a_{\mathbf{i}}|^{-1}< 2\check{r}^n$ as we noted above. In particular, we have 
\[
|I|\le 2 |a_{\mathbf{i}\cdot j}|^{-1} \quad\mbox{and}\quad  |a_{\mathbf{i}}|^{-1}\le |I_\nu|\le 2|a_{\mathbf{i}}|^{-1}
\]
and hence
\begin{equation}\label{eq:lengthI}
|I'|\le 2\quad\mbox{and}\quad |a_{j}|\le |I'_\nu|\le 2|a_{j}|\le 1.
\end{equation}
We first show that the claim of the lemma can be proved by simple manners if either of the following two conditions holds:
\begin{itemize}
\item[(I)] 
$Y_{\mathbf{i}\cdot j}(a_j\cdot \xi)\le Y_{\mathbf{i}\cdot j'}(a_{j'}\cdot \xi)/2$ on $I'$ 
for some pair  $(j,j')\in \mathcal{A}\times \mathcal{A}$. 
\item[(II)]  $X_{\mathbf{i}\cdot j_0}(a_{j_0}\cdot\xi)\le Y_{\mathbf{i}\cdot j_0}(a_{j_0}\cdot\xi)/2$ on $I'$ for some $ j_0\in \mathcal{A}$.
\end{itemize}

Suppose that  the condition (I) holds. 
Setting $a_j=\sqrt{p_j}$ and $b_{j}=\sqrt{p_j}\cdot Y_{\mathbf{i}\cdot j}(a_{j}\cdot \xi)$ in the identity
\[
\left(\sum_{j\in \mathcal{A}} a_j b_j\right)^2=\left(\sum_{j\in \mathcal{A}} a_j^2\right)\left(\sum_{j\in \mathcal{A}} b_j^2\right)-\frac{1}{2} \sum_{j,j'\in \mathcal{A}}(a_{j}b_{j'}-a_{j'}b_{j})^2,
\]
we get

\begin{align*}
\left(\sum_{j\in \mathcal{A}} p_j \cdot Y_{\mathbf{i}\cdot j}(a_{j}\cdot \xi)\right)^2&= \sum_{j\in \mathcal{A}} p_j \cdot Y_{j\cdot\mathbf{i}}(a_{j}\cdot \xi)^2
 - \sum_{j, j'\in \mathcal{A}} p_{j}p_{j'}\cdot 
\frac{|Y_{\mathbf{i}\cdot j}(a_{j}\cdot \xi)-Y_{\mathbf{i}\cdot j'}(a_{j'}\cdot \xi)|^2}{2}.
\end{align*}
Hence we have, from (\ref{eq:relY}), that
\begin{align*}
1-\frac{Y_{\mathbf{i}}(\xi)^2}{\sum_{j\in \mathcal{A}} p_j\cdot Y_{\mathbf{i}\cdot j}(a_j\cdot \xi)^2}
&\ge 1-
\frac{\left(\sum_{j\in \mathcal{A} } p_j\cdot Y_{\mathbf{i}\cdot j}(a_{j}\cdot \xi)\right)^2}
{\sum_{j\in \mathcal{A}} p_j\cdot Y_{\mathbf{i}\cdot j }(a_j\cdot \xi)^2}\\
&\ge
\frac{p_{\min}^2\cdot |Y_{\max}(\xi)-Y_{\min}(\xi)|^2 }
{2\cdot \sum_{j\in \mathcal{A}} p_j\cdot Y_{\mathbf{i}\cdot j }(a_j\cdot \xi)^2}\\
&\ge
\frac{p_{\min}^2\cdot |Y_{\max}(\xi)-Y_{\min}(\xi)|^2 }
{2\cdot Y_{\max}(\xi)^2}
\end{align*}
with setting $p_{\min}=\min_{i\in \mathcal{A}} p_i>0$ and
\[
Y_{\max}(\xi)=\max_{j\in \mathcal{A}}Y_{\mathbf{i}\cdot j}(a_j\cdot \xi),\qquad 
Y_{\min}(\xi)=\min_{j\in \mathcal{A}}Y_{\mathbf{i}\cdot j}(a_j\cdot \xi).
\]
From the condition  (I), we have $Y_{\min}(\xi)\le Y_{\max}(\xi)/2$ on $I'$ and hence  the right hand side of the inequality above is not less than $
p_{\min}^2/8$ for $\xi\in I'$. 
Therefore, choosing $\eta>0$ so small that  $e^{-\eta}>1-p_{\min}^2/8$, we obtain the conclusion of the lemma (with no exception for $1\le \nu\le N$). 

Next suppose that  the condition (II) holds and that the condition (I) does {\em not} hold.  
From the latter condition,   there exists a point $\bar{\xi}=\bar{\xi}(j,j') \in I'$ for each pair $(j, j')\in \mathcal{A}\times \mathcal{A}$ such that 
\begin{equation}\label{eq:Y}
Y_{ \mathbf{i}\cdot j}(a_j\cdot \bar{\xi})> Y_{\mathbf{i}\cdot j'}(a_{j'}\cdot \bar{\xi})/2.
\end{equation}
From  Lemma \ref{lm:Lip} and (\ref{eq:lengthI}), we have
\begin{equation}\label{eq:Ydis}
\frac{Y_{\mathbf{i}\cdot j}(a_j\cdot \xi)}{Y_{\mathbf{i}\cdot j}(a_j\cdot \xi')}
\le 
\alpha:=\exp\left(\frac{4\cdot \max_{1\le i\le m}|a_i|}{1-2(\max_{1\le i\le m}|a_i|)}\right)
\end{equation}
for all $\xi, \xi'\in I'$ and $j\in \mathcal{A}$. Hence  
\begin{equation}\label{eq:y}
Y_{\mathbf{i}\cdot j}(a_j\cdot \xi)
\ge  \frac{1}{2\alpha^2} \cdot 
Y_{\mathbf{i}\cdot j'}(a_{j'}\cdot \xi)\quad \mbox{for all $\xi \in I'$ and $(j, j')\in \mathcal{A}\times \mathcal{A}$.}
\end{equation}
In particular, setting $c=p_{\min}/(2\alpha^2)^2$, we have
\begin{equation}\label{eq:c}
p_j\cdot Y_{ \mathbf{i}\cdot j}(a_j\cdot \xi)^2\ge  c\cdot \sum_{j'\in  \mathcal{A}}
p_{j'}\cdot Y_{\mathbf{i}\cdot j'}(a_{j'}\cdot \xi)^2\quad \mbox{for all $\xi \in I'$ and $j\in \mathcal{A}$.}
\end{equation}

By the definition of $Y_{\mathbf{i}}(\xi)$ and the Schwartz inequality, we have
\begin{align*}
Y_{\mathbf{i}}(\xi)^2&\le 
\max\left\{\frac{1}{4} \left( \sum_{j\in \mathcal{A}}
p_j\cdot Y_{\mathbf{i}\cdot j}(a_{j}\cdot \xi)\right)^2, 
\left(\sum_{j\in \mathcal{A}}
p_j\cdot |X_{\mathbf{i}\cdot j}(a_{j}\cdot \xi)|\right)^2\right\}\\
&\le
\max\left\{\frac{1}{4} \sum_{j\in \mathcal{A}}
p_j\cdot Y_{\mathbf{i}\cdot j}(a_{j}\cdot \xi)^2, \;
\sum_{j\in \mathcal{A}}
p_j\cdot |X_{\mathbf{i}\cdot j}(a_{j}\cdot \xi)|^2\right\}\\
&\le
 \sum_{j\in \mathcal{A}}p_j\cdot 
 \max\left\{\frac{1}{4}
 Y_{\mathbf{i}\cdot j}(a_{j}\cdot \xi)^2, 
  |X_{\mathbf{i}\cdot j}(a_{j}\cdot \xi)|^2\right\}
\end{align*}
for all $\xi \in I'$. But, from the condition (II),  the last expression is bounded by 
\[
 \left(\sum_{j\in \mathcal{A}}p_j\cdot 
 Y_{\mathbf{i}\cdot j}(a_{j}\cdot \xi)^2\right) - \frac{1}{2} p_{j_0}\cdot Y_{ \mathbf{i}\cdot j_0}(a_{j_0}\cdot \xi)^2
\]
where $j_0\in \mathcal{A}$ is that in the condition (II). 
This together with the estimate (\ref{eq:c}) implies the conclusion of the lemma  (again with no exception for $1\le \nu\le N$),  choosing $\eta>0$ so small that  $e^{-\eta}\ge 1-(c/2)$. 

Below we consider the remaining case where neither of the conditions (I) and (II) holds. 
Notice that the estimates  (\ref{eq:Ydis}), (\ref{eq:y}) and (\ref{eq:c}) still hold  in this case, because these are consequences of Lemma \ref{lm:Lip} and  the assumption that the condition (I) does not hold. 
From the assumtion that the condition (II) does not hold,  for each $j\in \mathcal{A}$, there exists a point $\xi_j\in I$ such that 
\[
|X_{\mathbf{i}\cdot j}(a_j\cdot\xi_j)|\ge Y_{\mathbf{i}\cdot j}(a_j\cdot\xi_j)/2.
\]
From Lemma \ref{lm:Lip} and (\ref{eq:Ydis}), we have that
\begin{align*}
\left|\frac{d}{d\xi} (X_{\mathbf{i}\cdot j}(a_j\cdot \xi))\right|=\left|a_j\cdot\left(\frac{d}{d\xi} X_{\mathbf{i}\cdot j}\right)(a_j\cdot \xi)\right|&\le \frac{2\max_{i\in \mathcal{A}}|a_i|}{1-2\cdot \max_{i\in \mathcal{A}}|a_i|}\cdot Y_{ \mathbf{i}\cdot j}(a_j\cdot \xi)\\
&\le \frac{ 2\max_{i\in \mathcal{A}}|a_i|}{1-2\cdot \max_{i\in \mathcal{A}}|a_i|}\cdot \alpha \cdot Y_{\mathbf{i}\cdot j}(a_j\cdot \xi_j)
\end{align*}
for all $\xi \in I'$ and $j\in \mathcal{A}$. 
As we noted in the beginning, replacing the system $\{(S_i, p_i)\}_{i\in \mathcal{A}}$ by its iterates,  we may and do assume that the constant $\alpha$ in (\ref{eq:Ydis}) is  close to $1$ and that 
\[
\frac{2\max_{i\in \mathcal{A}}|a_i|}{1-2\cdot \max_{i\mathcal{A}}|a_i|}
\]
is close to $0$. Thus we may suppose without loss of generality  that the two inequalities above and (\ref{eq:Ydis}) imply  that we have
\begin{equation}\label{eq:XY}
|X_{\mathbf{i}\cdot j}(a_j\cdot \xi)|\ge Y_{\mathbf{i}\cdot j}(a_j\cdot\xi')/8
\end{equation}
and 
\begin{equation}\label{eq:XX}
\left|\frac{d}{d\xi}(X_{\mathbf{i}\cdot j}(a_j\cdot \xi))\right|\le 
\frac{ |X_{\mathbf{i}\cdot j}(a_j\cdot\xi')|}{10}
\end{equation}
for all $\xi, \xi' \in I'$  and $j\in \mathcal{A}$. 

Now we make use of the difference of the complex phases of the summands on the right hand side of (\ref{eq:relX}). 
From the definition of $Y_{\mathbf{i}}(\cdot)$ and  the relation (\ref{eq:relX}), we have that
\begin{align}
\left(\sum_{j \in \mathcal{A}} p_j \cdot Y_{\mathbf{i}\cdot j}(a_{j}\cdot \xi)\right)^2&-
|X_{\mathbf{i}}(\xi)|^2\ge 
\left(\sum_{j \in \mathcal{A}} p_j \cdot |T_j X_{\mathbf{i}\cdot j}(\xi)|\right)^2- 
\left|\sum_{j \in \mathcal{A}} p_j \cdot T_j X_{\mathbf{i}\cdot j}(\xi)\right|^2\notag\\
&\ge 
(1-\cos \Theta_{j',j''}(\xi))\cdot p_{j'} \cdot p_{j''}\cdot  |T_{j'}X_{\mathbf{i}\cdot j'}(\xi)|\cdot |
T_{j''} X_{\mathbf{i}\cdot j''}(\xi)|\label{eq:i-cos}
\end{align}
where $j'$ and $ j''$ are arbitrary two distinct elements in $\mathcal{A}$ and $\Theta_{j',j''}(\xi)$ denotes the difference between the complex arguments of $T_{j'}X_{\mathbf{i}\cdot j'}(\xi)$ and $T_{j''}X_{ \mathbf{i}\cdot j''}(\xi)$:
\[
\Theta_{j',j''}(\xi):=\arg\left(T_{j'}X_{\mathbf{i}\cdot j'}(\xi)\right)-\arg\left(T_{j'}X_{\mathbf{i}\cdot j''}(\xi)\right)\in \real/2\pi\mathbb{Z}.
\]
From (\ref{eq:y}) and (\ref{eq:XY}),  we have
\[ 
p_{j'}\cdot p_{j''}\cdot  |T_{j'}X_{ \mathbf{i}\cdot j'}(\xi)|\cdot |
T_{j''} X_{j''\cdot \mathbf{i}}(\xi)|\ge 
c'\cdot  \left(\sum_{j \in \mathcal{A}} p_j \cdot Y_{j\cdot \mathbf{i}}(a_{j}\cdot \xi)\right)^2,
\]
setting $c'=(1/8)^2\cdot (p_{\min}/(2\alpha^2))^{2}>0$. Putting this inequality in (\ref{eq:i-cos}), we obtain, for all $\xi \in I'$,  that
\begin{align*}
|X_{\mathbf{i}}(\xi)|^2&\le 
(1-c'(1-\cos\Theta_{j', j''}(\xi)))\cdot \left(\sum_{j \in \mathcal{A}} p_j \cdot Y_{ \mathbf{i}\cdot j}(a_{j}\cdot \xi)\right)^2
\intertext{and hence, from the definition of $Y_{\mathbf{i}}(\cdot)$, that }
|Y_{\mathbf{i}}(\xi)|^2&\le \max\{ 1/4, 1-c'(1-\cos\Theta_{j', j''}(\xi))\}\cdot \left(\sum_{j \in \mathcal{A}} p_j \cdot Y_{j\cdot \mathbf{i}}(a_{j}\cdot \xi)\right)^2\\
&\le(1-c'(1-\cos\Theta_{j', j''}(\xi)))\cdot \sum_{j \in \mathcal{A}} p_j \cdot Y_{ \mathbf{i}\cdot j}(a_{j}\cdot \xi)^2\quad \mbox{(as $c'<1/8$.)}
\end{align*}
Let us set $j'=1$ and $j''=m$ so that $b_{j'}-b_{j''}=1$ from the assumption (\ref{eq:b}). From  the definition of the operator $T_{j}$, we have that
\[
\frac{d}{d\xi} \Theta_{j',j''}(\xi)=-b_{j'}+b_{j''}+\frac{d}{d\xi} \arg(X_{j'\cdot \mathbf{i}}(a_{j'}\cdot\xi))-\frac{d}{d\xi} \arg(X_{ \mathbf{i}\cdot j''}(a_{j''}\cdot \xi)).
\]
Hence, from (\ref{eq:XX}), we get
\[
\left|\frac{d}{d\xi} \Theta_{j',j''}(\xi)-(b_{j''}-b_{j'})\right|=\left|\frac{d}{d\xi} \Theta_{j',j''}(\xi)-1\right|\le \frac{2}{10}.
\]
Since the length of the interval $I'$ and the subintervals $I'_\nu$ satisfy (\ref{eq:lengthI}), this implies that, for some small constant $\rho>0$ independent of $n$, we have
\[
\cos \Theta_{j',j''}(\xi) \le 1-\rho\quad\mbox{ on $ I'_{\nu}$}
\]
for all $1\le \nu \le N$ but for at most two exceptions.  We therefore obtain the conclusion of the lemma 
by choosing $\eta>0$ so small that 
$e^{-\eta}>1-c'\rho$. 
\end{proof}

\subsection{Proof of Theorem \ref{th:main}}
Let $n>0$ be a large integer. Let $\epsilon>0$ and 
$\eta>0$ be the constants  in Corollary \ref{cor} and Lemma \ref{lm:main} respectively. These constants do not depend on $n$.
For a sequence $\mathbf{i}\in \mathcal{A}^k$ with $0\le k<n$ and a point $\xi\in [-\check{r}^n, \check{r}^n]$, we set 
\[
r_{\mathbf{i}}(\xi)=
\begin{cases}
\eta,&\mbox{ if the inequality in (\ref{eq:mainest}) holds;}\\
0,&\mbox{otherwise}
\end{cases}
\]
for $\xi\in I(\mathbf{i})$. 
From (\ref{eq:YXn}) and the Schwartz inequality,  we have, in general,
\[
Y_{\mathbf{i}}(\xi)^2
\le  \sum_{j\in \mathcal{A}} p_j\cdot Y_{\mathbf{i}\cdot }(a_j\cdot \xi)^2\quad \mbox{for $\xi \in I(\mathbf{i})$.}
\]
So we have
\[
Y_{\mathbf{i}}(\xi)^2
\le  \exp(-r_{\mathbf{i}}(\xi))\cdot\sum_{j\in \mathcal{A}} p_j\cdot Y_{\mathbf{i}\cdot j}(a_j\cdot \xi)^2\quad \mbox{for $\xi \in I(\mathbf{i})$.}
\]
Applying this inequality inductively, we obtain, for $\xi \in I(\emptyset)=[-\check{r}^n, \check{r}^n]$, that
\begin{align*}
|\mathcal{F}\mu_n(\xi)|^2\le Y_{\emptyset}(\xi)^2&\le \sum_{\mathbf{i}\in \mathcal{A}^n} 
\exp\left( -\sum_{k=2}^{n+1} r_{[\mathbf{i}]_{k}^n}(a_{[\mathbf{i}]_{k}^n}\cdot\xi)\right)\cdot  p_{\mathbf{i}}\cdot Y_{\mathbf{i}}(a_{\mathbf{i}}\cdot \xi)^2\\
&=
\sum_{\mathbf{i}\in \mathcal{A}^n} 
\exp\left(- \sum_{k=2}^{n+1} r_{[\mathbf{i}]_{k}^n}(a_{[\mathbf{i}]_{k}^n}\cdot\xi)\right)\cdot  p_{\mathbf{i}}\qquad \mbox{by (\ref{eq:YXn})}
\end{align*}
where $[\,\mathbf{i}\,]_{k}^{n}\in\mathcal{A}^{n-k+1}$ is defined by 
\[
[\,\mathbf{i}\,]_{k}^{n}(i)=\mathbf{i}(k+i-1) \quad \mbox{for $1\le i\le n-k+1$.}
\]

To get the conclusion of Theorem \ref{th:main}, we estimate the Lebesgue measure of the subset 
\begin{equation}\label{eq:area}
Z_n:=\left\{ \xi\in  [-\check{r}^n, \check{r}^n]\;\left| \;
|\mathcal{F}\mu_n(\xi)|^2\ge 3\exp(-s \eta n)\right.\right\}
\end{equation}
for small $s>0$. 
Let $\mathcal{E}_n\subset  \mathcal{A}^n$ be the set of sequences $\mathbf{i}\in \mathcal{A}^n$ such that $|a_{\mathbf{i}}|^{-1}< 2\check{r}^n$. We will ignore the sequences in this subset. This does not cause problems because the probability of such set of sequences is small. In fact, by Lemma \ref{lm1}, we may suppose 
\[
\mathbf{p}^n(\mathcal{E}_n)\le  \exp(-s\eta n)
\]
by letting $s>0$ be small. For each $\xi \in Z_n$, we have
\[
 \sum_{\mathbf{i}\in \mathcal{A}^n\setminus \mathcal{E}_n} \exp\left(- \sum_{k=2}^{n+1} r_{[\mathbf{i}]_{k}^n}(a_{[\mathbf{i}]_{k}^n}\cdot\xi)\right)\cdot  p_{\mathbf{i}}\ge 2\exp(-s \eta n).
\]
So, if we get a bound for the integration of the function on the left hand side above, we also get a corresponding bound for the Lebesgue measure of $Z_n$. For more precise argument, we put 
\[
\mathbf{I}_{\mathbf{i}}=\int_{-\check{r}^n}^{\check{r}^n}
\max\left\{0, \exp\left(- \sum_{k=2}^{n+1} r_{[\mathbf{i}]_{k}^n}(a_{[\mathbf{i}]_{k}^n}\cdot\xi)\right)-\exp(-s\eta n)\right\}d\xi
\]
for each $\mathbf{i}\in \mathcal{A}^n$. Then 
have
\begin{equation}\label{eq:z}
\exp(-s \eta n)\cdot \mathrm{Leb} Z_n \le \sum_{\mathbf{i}\in \mathcal{A}^n\setminus \mathcal{E}_n} p_{\mathbf{i}}\cdot \mathbf{I}_{\mathbf{i}}.
\end{equation}

We are going to estimate the integral $\mathbf{I}_{\mathbf{i}}$ for $\mathbf{i}\in \mathcal{A}^n\setminus \mathcal{E}_n$ by an elementary  combinatorial argument. First of all,  note that,   if $\mathbf{i}\notin \mathcal{E}_n$, we have $|a_{\mathbf{i}}|^{-1}\ge 2\check{r}^n$ and the partition $\Xi_{\mathbf{i}}$ consists of the single interval $[-\check{r}^n, \check{r}^n]$.
From the construction of the partitions $\Xi_{\mathbf{i}}$, 
 each interval in the partition $\Xi_{[\mathbf{i}]_{k}^n}$ with $0\le k\le n-1$ consists of at most 
\[
\kappa:=\left[2\cdot \max_{i\in \mathcal{A}} \cdot |a_{i}|^{-1}\right].
\]
consecutive intervals in $\Xi_{[\mathbf{i}]_{k+1}^{n}}$.
And, from Lemma \ref{lm:main}, the condition $r_{[\mathbf{i}]_{k}^n}(a_{[\mathbf{i}]_{k}^n}\cdot\xi)=0$ holds for a point $\xi$ in at most two intervals among them.
Since the integrand in the definition of $\mathbf{I}_{\mathbf{i}}$ is bounded by $1$ and  takes a positive value only if 
\[
\sum_{k=2}^{n+1} r_{[\mathbf{i}]_{k}^n}(a_{[\mathbf{i}]_{k}^n}\cdot\xi)\le s \eta n
\]
The last condition 
implies that $r_{[\mathbf{i}]_{k}^n}(a_{[\mathbf{i}]_{k}^n}\cdot\xi)=0$ holds for all $2\le k\le  n+1$ 
 but for at most $[s n]$ exceptions. 
 Hence we obtain 
\[
\mathbf{I}_{\mathbf{i}}\le  \sum_{\nu\le [sn]} {n\choose \nu}\cdot  2^{n-\nu}\cdot \kappa^\nu.
 \]
By using Stirling's formula
\[
\log m!= m\log m -m +\mathcal{O}(\log m)
\]
and the inequality $\log(1+x)\le x$ that holds for any $x\ge 0$, 
we get
\begin{align*}
\log \left({n\choose \nu}\cdot  2^{n-\nu}\cdot \kappa^\nu\right)
&\le \log n!-\log \nu!-\log ((n-\nu)!)\\
&\qquad +(n-\nu)\log 2+\nu \log \kappa+\mathcal{O}(\log n)\\
&\le (n-\nu) \log \left(1+\frac{\nu}{n-\nu}\right)+\nu \log \left(1+\frac{n-\nu}{\nu}\right)\\
&\qquad +(n-\nu)\log 2+\nu \log \kappa+\mathcal{O}(\log n)\\
&\le \nu \log \kappa+n(1+\log 2)+\mathcal{O}(\log n).
\end{align*}
Hence  we obtain
\begin{align*}
\mathbf{I}_{\mathbf{i}}&\le \exp\bigg(n(s\log \kappa+2)+\mathcal{O}(\log n)\bigg).
\end{align*}
Using this estimate in (\ref{eq:z}), we conclude 
\[
\log  \mathrm{Leb} Z_n \le n(s (\eta+\log \kappa)+2)+\mathcal{O}(\log n),
\]
that is, 
\[
\frac{1}{n} \log \left(\mathrm{Leb}\{x\in [-\check{r}^n , \check{r}^n]\mid |\mathcal{F}\mu_n(\xi)| \ge 3e^{-s\eta n}  \}\right)
\le s(\eta+ \log \kappa)+2+\mathcal{O}\left(\frac{\log n}{n}\right).
\]
Together with Lemma \ref{lm:apx}, this implies
\[
R(c;\mu)=\varlimsup_{t\to \infty}\frac{1}{t}\log \left(\mathrm{Leb}\{x\in [-e^t , e^t]\mid |\mathcal{F}\mu(\xi)| \ge e^{-ct}  \}\right)\le 
\frac{s (\eta+\log \kappa)+2}{\log \check{r}}
\]
with $c=(s\cdot \eta)/(\log \check{r})$, provided $s>0$ is sufficiently small. This implies 
\[
\lim_{c\to +0}R(c;\mu)\le 
\frac{2}{\log \check{r}}.
\]
Since  we may assume $\check{r}$ to be  arbitrarily large by replacing the system $\{S_i, p_i\}_{i\in \mathcal{A}}$ by its iterates, we obtain Theorem \ref{th:main}.
\begin{bibdiv}
\begin{biblist}
\bib{MattilaBook}{book}{
   author={Mattila, Pertti},
   title={Geometry of sets and measures in Euclidean spaces},
   series={Cambridge Studies in Advanced Mathematics},
   volume={44},
   note={Fractals and rectifiability},
   publisher={Cambridge University Press},
   place={Cambridge},
   date={1995},
   pages={xii+343},
   isbn={0-521-46576-1},
   isbn={0-521-65595-1},
   review={\MR{1333890 (96h:28006)}},
   %doi={10.1017/CBO9780511623813},
}

\bib{Strichartz}{article}{
   author={Strichartz, Robert S.},
   title={Self-similar measures and their Fourier transforms. I},
   journal={Indiana Univ. Math. J.},
   volume={39},
   date={1990},
   number={3},
   pages={797--817},
   issn={0022-2518},
   review={\MR{1078738 (92k:42015)}},
  % doi={10.1512/iumj.1990.39.39038},
}	

\bib{Young90}{article}{
   author={Young, Lai-Sang},
   title={Large deviations in dynamical systems},
   journal={Trans. Amer. Math. Soc.},
   volume={318},
   date={1990},
   number={2},
   pages={525--543},
   issn={0002-9947},
   review={\MR{975689 (90g:58069)}},
   %doi={10.2307/2001318},
}
\bib{Takahashi87}{article}{
   author={Takahashi, Y.},
   title={Asymptotic behaviours of measures of small tubes: entropy,
   Liapunov's exponent and large deviation},
   conference={
      title={Dynamical systems and applications},
      address={Kyoto},
      date={1987},
   },
   book={
      series={World Sci. Adv. Ser. Dynam. Systems},
      volume={5},
      publisher={World Sci. Publishing},
      place={Singapore},
   },
   date={1987},
   pages={1--21},
   review={\MR{974155 (90a:58098)}},
}
\bib{Varadhan}{article}{
   author={Varadhan, S. R. S.},
   title={Large deviations},
   journal={Ann. Probab.},
   volume={36},
   date={2008},
   number={2},
   pages={397--419},
   issn={0091-1798},
   review={\MR{2393987 (2009d:60070)}},
   doi={10.1214/07-AOP348},
}
\end{biblist}
\end{bibdiv}
\end{document}